\newtheorem{thm}{Theorem}[section]
\newtheorem{lem}[thm]{Lemma}
\theoremstyle{definition}
\newtheorem{ques}[thm]{Question}
\theoremstyle{remark}
\newtheorem{rem}[thm]{Remark}
\newtheorem*{ex}{Example}
\numberwithin{equation}{section}
\begin{document}
	
	%
	%
	%
	%
	%
	%
	%
	%

	\title[$p$-Adic quotient sets: linear recurrence sequences]
	{$p$-Adic quotient sets: linear recurrence sequences with reducible characteristic polynomials}

	\author[Deepa Antony]{Deepa Antony (ORCID: 0000-0002-4214-2889)}
	
	\address{
		Department of Mathematics \\
		Indian Institute of Technology Guwahati \\
		Assam, India, PIN- 781039}
	
	\email{deepa172123009@iitg.ac.in}
	
	\thanks{Journal: To appear at Canadian Mathematical Bulletin}
	\author[Rupam Barman]{Rupam Barman (ORCID: 0000-0002-4480-1788)}
	\address{Department of Mathematics \\
		Indian Institute of Technology Guwahati \\
		Assam, India, PIN- 781039}
	\email{rupam@iitg.ac.in}
	\subjclass{Primary 11B05, 11B37, 11E95}
	
	\keywords{$p$-adic number, Quotient set, Ratio set, Linear recurrence sequence}
	
	\date{\today}
	
	\begin{abstract}
		Let $(x_n)_{n\geq0}$ be a linear recurrence sequence of order $k\geq2$ satisfying
		$$x_n=a_1x_{n-1}+a_2x_{n-2}+\dots+a_kx_{n-k}$$ for all integers $n\geq k$, where $a_1,\dots,a_k,x_0,\dots, x_{k-1}\in \mathbb{Z},$ with $a_k\neq0$. 
		In 2017, Sanna posed an open question to classify primes $p$ for which the quotient set of $(x_n)_{n\geq0}$ is dense in $\mathbb{Q}_p$. In a recent paper, we showed that if the characteristic polynomial of the recurrence sequence has a root $\pm \alpha$, where $\alpha$ is a Pisot number and if $p$ is a prime such that the characteristic polynomial of the recurrence sequence is irreducible in $\mathbb{Q}_p$, 
		then the quotient set of $(x_n)_{n\geq 0}$ is dense in $\mathbb{Q}_p$. In this article, we answer the problem for certain linear recurrence sequences whose characteristic polynomials are reducible over $\mathbb{Q}$. 
	\end{abstract}
	
	\maketitle
	\section{Introduction and statement of results} 
	For a set of integers $A$, the set $R(A)=\{a/b:a,b\in A, b\neq 0\}$ is called the ratio set or quotient set of $A$. Many authors have studied the denseness of ratio sets of different subsets of $\mathbb{N}$ in the positive real numbers. 
	See for example \cite{real-1, real-2, real-3, real-4, real-5, real-6, real-7, real-8, real-9, real-10, real-11, real-12, real-13, real-14}. An analogous study has also been done for algebraic number fields, see for example \cite{algebraic-1, algebraic-2}. 
	\par For a prime $p$, let $\mathbb{Q}_p$  denote the field of $p$-adic numbers. In recent years, the denseness of ratio sets in $\mathbb{Q}_p$ have been studied by several authors, 
	see for example \cite{cubic, diagonal, Donnay, garciaetal, garcia-luca, miska, piotr, miska-sanna, Sanna1}. Let $(F_n)_{n\geq 0}$ be the sequence of Fibonacci numbers, 
	defined by $F_0=0$, $F_1=1$ and $F_n=F_{n-1}+F_{n-2}$ for all integers $n\geq 2$. In \cite{garcia-luca}, Garcia and Luca showed that the ratio set of Fibonacci numbers is dense in $\mathbb{Q}_p$ for all primes $p$. 
	Later, Sanna \cite[Theorem 1.2]{Sanna1} showed that, for any $k\geq 2$ and any prime $p$, the ratio set of the $k$-generalized Fibonacci numbers is dense in $\mathbb{Q}_p$ 
and made the following open question.
	\begin{ques}\cite [Question 1.3]{Sanna1}\label{question-1}
		Let $(S_n)_{n\geq0}$ be a linear recurrence sequence of order $k\geq2$ satisfying
		\begin{align*}
			S_n=a_1S_{n-1}+a_2S_{n-2}+\dots+a_kS_{n-k},
		\end{align*}
		for all integers $n\geq k$, where $a_1,\dots,a_k,S_0,\dots,S_{k-1}\in \mathbb{Z}$, with $a_k\neq0$. For which prime numbers $p$ is the quotient set of $(S_n)_{n\geq0}$ dense in $\mathbb{Q}_p?$
	\end{ques}
	In \cite{garciaetal}, Garcia et al. solved the problem partially for second-order recurrences. 
Later, in \cite{reccurence}, we considered $k$th-order recurrence sequences for which $a_k=1$ and initial values $S_0=\cdots=S_{k-2}=0$, $S_{k-1}=1$. We showed that if the characteristic polynomial of the recurrence sequence has a root $\pm \alpha$, where $\alpha$ is a Pisot number and if $p$ is a prime such that the characteristic polynomial of the recurrence sequence is irreducible in $\mathbb{Q}_p$, 
then the quotient set of $(x_n)_{n\geq 0}$ is dense in $\mathbb{Q}_p$. In this article, our objective is to study the denseness of quotient sets of linear recurrence sequences whose characteristic polynomials are reducible over $\mathbb{Q}$. Also, we extend \cite[Theorem 1.9]{reccurence}, which gives condition for the denseness of ratio sets of  second order linear recurrence sequences $(x_n)_{n\geq 0}$ whose characteristic polynomials are of the form $(x-a)^2$, to $k$th order linear recurrence sequences with characteristic polynomials of the form $(x-a)^k$ in the case when the initial values are given as $x_0=x_1=\dots=x_{k-2}=0,x_{k-1}=1$. 
\par In our first theorem, we consider recurrence sequences having characteristic polynomials whose roots are all distinct. 
\begin{thm}\label{new-thm1}
	Let $(x_n)_{n\geq0}$ be a linear recurrence of order $k\geq2$ satisfying
	\begin{align*}
	x_n=b_1x_{n-1}+b_2x_{n-2}+\dots+b_kx_{n-k},
	\end{align*}
	for all integers $n\geq k$, where $b_1,\dots,b_k,x_0,\dots,x_{k-1}\in \mathbb{Z}$, with $b_k\neq0$ and $x_0,x_1,\dots,\\x_{k-1}$ not all zeros. Suppose that the characteristic polynomial of $(x_n)_{n\geq0}$ is given by $$(x-a_1)(x-a_2)\dots (x-a_k),$$ where $a_i\in \mathbb{Z}$, $ a_i\neq a_j $ for $1\leq i\neq j\leq k$, and $\gcd(a_i, a_j)=1$ for all $i\neq j$. Let $p$ be a prime such that $p\nmid a_1a_2\cdots a_k$. If $x_0=0$, then the quotient set of $(x_n)_{n\geq0}$ is dense in $\mathbb{Q}_p$.
\end{thm}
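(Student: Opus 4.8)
The plan is to start from the Binet-type expansion $x_n=\sum_{i=1}^{k}c_ia_i^{\,n}$, which is valid since the $a_i$ are distinct; here $c_1,\dots,c_k\in\mathbb{Q}$ are determined by the initial data, and $x_0=0$ gives $\sum_i c_i=0$. Scaling the sequence does not change its quotient set, so I may clear denominators and take $c_i\in\mathbb{Z}$; discarding the roots with $c_i=0$ (at least two survive, since the surviving $c_i$ are nonzero and sum to $0$), I may take all $c_i\ne0$; and I set aside the degenerate sequence $x_n=c\,(1-(-1)^n)$, whose quotient set is $\{0,1\}$. Under these reductions the surviving roots are still distinct and pairwise coprime, $p$ still divides none of them, and at least one surviving $a_i$ has $|a_i|\ge2$. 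Since $0=x_0/x_m$ is attained, it is enough to approximate an arbitrary $\eta=p^{v}\rho$ with $v\in\mathbb{Z}$, $\rho\in\mathbb{Z}_p^{\times}$, to within any prescribed $p^{-K}$.

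Assume $p$ is odd (the case $p=2$ is handled the same way, with $4$ in place of $p$). Let $d$ be the least common multiple of the multiplicative orders of $a_1,\dots,a_k$ modulo $p$, so that $y_i:=a_i^{d}-1\in p\mathbb{Z}_p$ for each $i$. For $t\in\mathbb{Z}$ one expands
$$x_{dt}=\sum_i c_i(1+y_i)^{t}=\sum_{j\ge1}\binom{t}{j}D_j,\qquad D_j:=\sum_i c_iy_i^{\,j},$$
a $p$-adically convergent sum with $v_p(D_j)\ge j$. The key step is to specialize $t=p^{a}w$ with $p\nmid w$ and $a$ large: since $\binom{p^{a}w}{j}/p^{a}\to w(-1)^{j-1}/j$ as $a\to\infty$, one gets
$$\frac{x_{dp^{a}w}}{p^{a}}\ \longrightarrow\ w\,\Lambda,\qquad \Lambda:=\sum_{j\ge1}\frac{(-1)^{j-1}}{j}D_j=\sum_i c_i\log_p\!\big(a_i^{d}\big)=d\sum_i c_i\log_p a_i,$$
with the explicit rate $v_p\big(x_{dp^{a}w}/p^{a}-w\Lambda\big)\ge a$. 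Granting that $\Lambda\ne0$, put $m':=v_p(\Lambda)<\infty$; then for every integer $w$ coprime to $p$ and every sufficiently large $a$ we obtain $v_p(x_{dp^{a}w})=a+m'$, and the unit part of $x_{dp^{a}w}$ is congruent to $w\,(\Lambda/p^{m'})$ modulo an arbitrarily high power of $p$.

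With this in hand the theorem follows quickly. Given $\eta=p^{v}\rho$ and $K$, choose $M>K-v$, then large integers $a,a'$ with $a-a'=v$, and a positive integer $w$ with $p\nmid w$ and $w\equiv\rho\pmod{p^{M}}$; the ratio $x_{dp^{a}w}/x_{dp^{a'}}$ (taking $w=1$ in the denominator) then has valuation $v$ and unit part $\equiv\rho\pmod{p^{M}}$, hence lies within $p^{-(v+M)}<p^{-K}$ of $\eta$, and $dp^{a}w,\,dp^{a'}$ are legitimate nonnegative indices. Finally, $\Lambda\ne0$ is elementary: the roots with $a_i=\pm1$ contribute $0$ to $\Lambda=d\sum_i c_i\log_p a_i$, while for the others $|a_i|\ge2$ and, by $\gcd(a_i,a_j)=1$, their prime supports are pairwise disjoint, so a relation $\sum_i c_i\log_p a_i=0$ would force $\prod_i a_i^{c_i}\in\ker(\log_p)=\mu_{p-1}$, i.e. $\prod_i a_i^{c_i}=\pm1$; unique factorization then gives $c_i=0$ for every $a_i$ with $|a_i|\ge2$, contradicting our reduction.

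The main obstacle is the analytic estimate of the second paragraph: one must verify $v_p\big(\binom{p^{a}w}{j}\big)=a-v_p(j)$, pin down $\binom{p^{a}w}{j}$ modulo high powers of $p$ so that the convergence $x_{dp^{a}w}/p^{a}\to w\Lambda$ carries the uniform rate claimed, and control the tail $\sum_{j\text{ large}}$ via $v_p(D_j)\ge j$. Once that bookkeeping is done the rest is mechanical; note in particular that the required non-vanishing of the $p$-adic logarithmic combination $\Lambda$ rests only on disjointness of prime supports, so no transcendence input of Baker type is needed.
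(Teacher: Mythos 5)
Your argument is correct in substance, but it takes a genuinely different route from the paper's. The paper writes $x_n=\sum_i c_i a_i^n$, interpolates $n\mapsto \det(A)\,x_{n(p-1)}$ by the analytic function $f(z)=\det(A)\sum_i c_i\exp_p(z\log_p a_i^{p-1})$, checks $f(0)=\det(A)x_0=0$ and $f'(0)=\det(A)\sum_i c_i\log_p a_i^{p-1}\neq 0$, and then invokes the quoted criterion (Theorem \ref{zero-dense}) that an analytic function with a simple zero in $\mathbb{Z}_p$ has dense quotient set of values. Your $\Lambda=d\sum_i c_i\log_p a_i$ is exactly this $f'(0)$ up to normalization, and your second and third paragraphs amount to reproving the simple-zero criterion by hand through the binomial expansion of $\binom{p^aw}{j}$; that costs you the valuation bookkeeping you flag at the end, but it is self-contained and yields explicit indices and an explicit rate of approximation. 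Your reductions are legitimate: scaling and dropping the terms with $c_i=0$ do not change the quotient set, and at least two nonzero $c_i$ survive since they sum to $0$ and are not all zero; your use of $d$ equal to the least common multiple of the orders modulo $p$ (modulo $4$ when $p=2$) also handles the convergence of $\log_p(a_i^d)$ more robustly than the paper's exponent $p-1$, which is delicate at $p=2$.

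The one point of real divergence is instructive. Both proofs reduce the nonvanishing of $f'(0)$ (your $\Lambda$) to multiplicative independence coming from pairwise coprimality, but you correctly observe that roots $a_i=\pm1$ contribute nothing to $\Lambda$ and must be excluded from that argument, which forces you to set aside the degenerate sequence $x_n=c\,(1-(-1)^n)$. This is not a cosmetic caveat: that sequence is realizable under all the stated hypotheses (take $k=2$, characteristic polynomial $x^2-1$, $x_0=0$, $x_1=1$, giving $0,1,0,1,\dots$ with quotient set $\{0,1\}$, and $p\nmid a_1a_2=-1$ for every $p$), so the theorem as stated fails there, and the paper's assertion that $\gcd(a_i,a_j)=1$ makes $a_1^{p-1},\dots,a_k^{p-1}$ multiplicatively independent is false precisely when $1$ or $-1$ occurs among the roots. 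Your version of the statement, with this exceptional case removed, is the correct one; apart from that, your unique-factorization argument for $\Lambda\neq0$ is the same as the paper's (note only that on $1+p\mathbb{Z}_p$ the series logarithm is injective, so you in fact get $\prod_i a_i^{dc_i}=1$ rather than merely a root of unity, which only strengthens your conclusion).
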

\begin{ex}
	Suppose that $p_1,p_2$, and $p_3$ are distinct primes. Let $(x_n)_{n\geq 0}$ be a linear recurrence sequence defined by the recurrence relation $$x_{n}=(p_1+p_2+p_3)x_{n-1}-(p_1p_2+p_1p_3+p_2p_3)x_{n-2}+(p_1p_2p_3)x_{n-3}$$  for $n\geq 3$, where $x_0=0$, and $x_1$ and $x_2$ are any integers not both zero. The characteristic polynomial is equal to $(x-p_1)(x-p_2)(x-p_3)$. Hence, by Theorem \ref{new-thm1}, the quotient set of $(x_n)_{n\geq 0}$ is dense in $\mathbb{Q}_p$ for all primes $p\neq p_1,p_2, p_3$.
\end{ex}
In the following theorem, we consider $k$th order linear recurrence sequences whose characteristic polynomials have exactly two equal roots.  
\begin{thm}\label{thm2}
	Let $(x_n)_{n\geq0}$ be a linear recurrence of order $k\geq3$ satisfying
	\begin{align*}
	x_n=b_1x_{n-1}+b_2x_{n-2}+\dots+b_kx_{n-k},
	\end{align*}
	for all integers $n\geq k$, where $b_1,\dots,b_k,x_0,\dots,x_{k-1}\in \mathbb{Z}$, with $b_k\neq0$. Suppose that the characteristic polynomial of $(x_n)_{n\geq0}$ is given by $$(x-a_1)^2(x-a_2)(x-a_3)\dots (x-a_{k-1}),$$ 
	where $a_i\in \mathbb{Z}, a_i\neq a_j $ for $1\leq i\neq j\leq k-1$, and $x_0=x_1=\dots=x_{k-2}=0,x_{k-1}=1$. Let $p$ be a prime such that $p\nmid a_1a_2\cdots a_{k-1}$. If $a_i\not\equiv a_j\pmod{p}$ for all $i\neq j$, then the quotient set of $(x_n)_{n\geq 0}$ is dense in $\mathbb{Q}_p$.
\end{thm}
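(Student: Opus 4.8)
The plan is to reduce the statement to a congruence‑surjectivity property of the sequence and then to prove that property by induction on the modulus.

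\textbf{Reduction.} Since $x_m/x_n$ and $x_n/x_m$ both lie in $R:=R((x_n)_{n\ge0})$, the set $R\setminus\{0\}$ is invariant under $x\mapsto 1/x$, which is a homeomorphism of $\mathbb{Q}_p^\times$; hence it suffices to prove that $R$ is dense in $\mathbb{Z}_p$ (any $\xi$ with $v_p(\xi)<0$ is then the limit of the inverses of elements of $R$ tending to $\xi^{-1}\in p\mathbb{Z}_p\setminus\{0\}$). As $x_{k-1}=1$, we have $x_n=x_n/x_{k-1}\in R$ for every $n$, so it is enough to show: for every $N\ge1$, $\{x_n\bmod p^N:n\ge0\}=\mathbb{Z}/p^N\mathbb{Z}$.

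\textbf{Closed form.} From $x_0=\dots=x_{k-2}=0$, $x_{k-1}=1$ one gets $\sum_{n\ge0}x_nt^n=\frac{t^{k-1}}{(1-a_1t)^2\prod_{i=2}^{k-1}(1-a_it)}$, and a partial fraction expansion followed by coefficient extraction gives, for all $m\ge0$,
\[ y_m:=x_{m+k-1}=(\alpha m+c)\,a_1^m+\sum_{i=2}^{k-1}\gamma_i\,a_i^m,\qquad \alpha=\frac{a_1^{k-2}}{\prod_{i=2}^{k-1}(a_1-a_i)}, \]
where $c$ and the $\gamma_i$ are rationals whose denominators involve only the $a_i$ and the differences $a_i-a_j$. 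By the hypotheses $p\nmid a_1\cdots a_{k-1}$ and $a_i\not\equiv a_j\pmod p$, every $a_i$ and every $a_i-a_j$ is a unit of $\mathbb{Z}_p$, so $\alpha\in\mathbb{Z}_p^\times$, $c,\gamma_i\in\mathbb{Z}_p$, $a_i\in\mathbb{Z}_p^\times$, and $y_0=x_{k-1}=1$.

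\textbf{Surjectivity modulo $p^N$.} We show $\{y_m\bmod p^N:m\ge0\}=\mathbb{Z}/p^N\mathbb{Z}$ by strong induction on $N$. Put $E_N=\operatorname{lcm}_{1\le i\le k-1}\operatorname{ord}_{p^N}(a_i)$, which makes sense since each $a_i\in\mathbb{Z}_p^\times$; as $\operatorname{ord}_{p^N}(a_i)\mid p^{N-1}(p-1)$, the integer $\delta:=v_p(E_N)$ satisfies $0\le\delta\le N-1$. For any $m_0,j\ge0$ we have $a_i^{m_0+E_Nj}\equiv a_i^{m_0}\pmod{p^N}$ for all $i$, so the closed form yields
\[ y_{m_0+E_Nj}\equiv y_{m_0}+B\,j\pmod{p^N},\qquad B:=\alpha\,a_1^{m_0}\,E_N, \]
and $v_p(B)=v_p(E_N)=\delta$ because $\alpha,a_1^{m_0}\in\mathbb{Z}_p^\times$. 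If $\delta=0$ — in particular when $N=1$, since $E_1\mid p-1$ — then $B\in\mathbb{Z}_p^\times$, and taking $m_0=0$ the values $y_{E_Nj}\equiv 1+Bj$ run over all of $\mathbb{Z}/p^N\mathbb{Z}$. If $\delta\ge1$, then $\delta<N$; given a target $c\in\mathbb{Z}/p^N\mathbb{Z}$, the induction hypothesis supplies $m_0\ge0$ with $y_{m_0}\equiv c\pmod{p^\delta}$, and as $j$ runs over $0,1,\dots,p^{N-\delta}-1$ the values $y_{m_0}+Bj$ run over precisely the residues mod $p^N$ congruent to $y_{m_0}$ mod $p^\delta$, one of which is $c$. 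This proves the claim; since $\{y_m:m\ge0\}\subseteq\{x_n:n\ge0\}$, it follows that $\{x_n\bmod p^N:n\ge0\}=\mathbb{Z}/p^N\mathbb{Z}$, and the Reduction step finishes the proof.

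\textbf{Main obstacle.} The derivation of the closed form is routine; the real content is the induction, and the double root is what makes it work. The term $\alpha m\,a_1^m$ — whose coefficient $\alpha$ is forced to be a $p$-adic \emph{unit} precisely by the hypotheses $p\nmid\prod a_i$ and $a_i\not\equiv a_j\pmod p$ — lets one vary the top $p$-adic digit of $y_m$ along an arithmetic progression in $m$ while freezing the lower digits, so the surjectivity can be lifted from $p^\delta$ to $p^N$. If all roots were simple (the situation of Theorem \ref{new-thm1}), $y_m$ would be \emph{constant} on each such progression and this mechanism would collapse, which is why that case requires the additional coprimality assumption and an entirely different argument.
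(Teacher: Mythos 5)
Your proof is correct, and it takes a genuinely different route from the paper. The paper writes $x_n=a_1^n(c_0+c_1n)+\sum_{i\ge2}c_ia_i^n$, interpolates $n\mapsto\det(A)x_{n(p-1)}$ by a $p$-adic analytic function $f$ on $\mathbb{Z}_p$ built from $\exp_p$ and $\log_p$, checks that $f(0)=\det(A)x_0=0$ while $f'(0)\not\equiv0\pmod p$ (using the identity $\det(A)c_1=(-1)^{k+1}\prod_{i<j}(a_i-a_j)$ and the fact that $p\mid\log_p u^{p-1}$ for units $u$), and then invokes the Miska--Murru--Sanna criterion (Theorem \ref{zero-dense}) that an analytic function with a simple zero in $\mathbb{Z}_p$ has $R(f(\mathbb{N}))$ dense in $\mathbb{Q}_p$. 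You instead prove the strictly stronger statement that the sequence itself is dense in $\mathbb{Z}_p$, via surjectivity modulo $p^N$ for every $N$, and then pass to $\mathbb{Q}_p$ using $x_{k-1}=1$ and closure of quotient sets under inversion; the reduction step, the closed form (I checked $\alpha=a_1^{k-2}/\prod_{i\ge2}(a_1-a_i)$, and the integrality of $c,\gamma_i$ does use the full hypothesis $a_i\not\equiv a_j\pmod p$ for all pairs, not just those involving $a_1$), and the lifting induction on $N$ via the progressions $m_0+E_Nj$ are all sound. What your approach buys is the elimination of all $p$-adic analysis (no $\exp_p$, $\log_p$, Hensel, or the external Theorem \ref{zero-dense}) together with a stronger conclusion (denseness of $(x_n)$ itself in $\mathbb{Z}_p$, hence every residue class mod $p^N$ is hit); what the paper's approach buys is uniformity, since the same analytic-interpolation template handles Theorems \ref{new-thm1}, \ref{thm3} and \ref{thm1} as well. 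Both arguments ultimately rest on the same arithmetic point: the hypotheses force the coefficient of the $na_1^n$ term to be a $p$-adic unit. One cosmetic remark: you use $c$ both for the constant in the closed form $(\alpha m+c)a_1^m$ and for the target residue class in the induction; rename one of them.
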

\begin{ex}
	Given an integer $a$, let $(x_n)_{n\geq 0}$ be a linear recurrence sequence defined by the recurrence relation 
	$$x_{n}=4ax_{n-1}-5a^2x_{n-2}+2a^3x_{n-3}$$ 
	for $n\geq 3$, where $x_0=x_1=0$ and $x_2=1$. The characteristic polynomial is equal to $(x-a)^2(x-2a)$.  By Theorem \ref{thm2},  the quotient set of $(x_n)_{n\geq 0}$ is dense in $\mathbb{Q}_p$ for all primes $p\nmid 2a$.
\end{ex}
\begin{thm}\label{thm3}
	Let $(x_n)_{n\geq0}$ be a linear recurrence of order $k\geq2$ satisfying
	\begin{align*}
	x_n=b_1x_{n-1}+b_2x_{n-2}+\dots+b_kx_{n-k},
	\end{align*}
	for all integers $n\geq k$, where $b_1,\dots,b_k,x_0,\dots,x_{k-1}\in \mathbb{Z}$, with $b_k\neq0$. Suppose that the characteristic polynomial of $(x_n)_{n\geq0}$ is given by $(x-a)^k$, where $a\in \mathbb{Z}$, and $x_0=x_1=\dots=x_{k-2}=0,x_{k-1}=1$. If $p$ is a prime such that $p\nmid a$, then the quotient set of $(x_n)_{n\geq 0}$ is dense in $\mathbb{Q}_p$.
\end{thm}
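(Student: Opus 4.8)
The plan is: (1) write down a closed form for $(x_n)$; (2) reduce denseness in $\mathbb Q_p$ to $p$-adically approximating positive integers; (3) produce those approximations by a single explicit family of index pairs $(m,n)$.

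For (1): since the characteristic polynomial is $(x-a)^k$, the general solution of the recurrence is $q(n)a^n$ with $\deg q\le k-1$, and imposing $x_0=\dots=x_{k-2}=0$, $x_{k-1}=1$ pins it down to $x_n=\binom{n}{k-1}a^{\,n-k+1}$ for all $n\ge0$ (one checks this satisfies the recurrence directly, using that the $k$-th finite difference of a polynomial of degree $k-1$ vanishes; alternatively, $\binom{n}{k-1}$ solves the recurrence with characteristic polynomial $(x-1)^k$, and multiplying by $a^n$ moves the root to $a$). Thus $x_n=0$ for $n\le k-2$, $x_n\ne0$ for $n\ge k-1$, and $x_m/x_n=a^{\,m-n}\binom{m}{k-1}/\binom{n}{k-1}$ for $m,n\ge k-1$. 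For (2): the quotient set $R$ is stable under $z\mapsto z^{-1}$ and $\overline R$ is closed, so $\overline R=\mathbb Q_p$ whenever $\overline R\supseteq\mathbb Z_p$; and $\mathbb Z_{>0}$ is dense in $\mathbb Z_p$, so it is enough to show that for every positive integer $\gamma$ (write $\gamma=p^e\gamma_0$, $e=v_p(\gamma)$, $p\nmid\gamma_0$) and every $N\ge1$ there are $m,n\ge k-1$ with $v_p(x_m/x_n-\gamma)\ge N$.

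For (3): take $V$ large (depending on $N$ and $k$), set $V'=V-v_p(k-1)$, and choose the indices $n=p^Vs$ and $m=p^{\,V+e}$, where $s$ is a positive integer coprime to $p$ to be selected. From $\binom{n}{k-1}=\frac1{(k-1)!}\prod_{j=0}^{k-2}(n-j)$ and the ultrametric inequality one computes $v_p\binom{n}{k-1}=V'$, $v_p\binom{m}{k-1}=V'+e$, and — for $V$ large enough — $\binom{n}{k-1}/p^{V'}\equiv Cs$ and $\binom{m}{k-1}/p^{V'+e}\equiv C\pmod{p^N}$, where $C=(-1)^{k-2}/(k-1)'$ is a $p$-adic unit and $(k-1)'$ denotes the prime-to-$p$ part of $k-1$; the key gain is that the constant $C$ is the same for $m$ and for $n$, so it disappears from the ratio. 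Finally, since $p\nmid a$, the order of $a$ modulo $p^N$ factors as $p^\beta c$ with $p\nmid c$, and because $m-n=p^V(p^e-s)$ with $V>\beta$ we get $a^{m-n}\equiv1\pmod{p^N}$ provided $s\equiv p^e\pmod c$ (an empty condition when $p=2$). Combining, $x_m/x_n\equiv p^e/s\pmod{p^{N+e}}$, so it only remains to pick $s>0$ with $p\nmid s$, $s\equiv p^e\pmod c$, and $s\equiv\gamma_0^{-1}\pmod{p^N}$; such $s$ exists by the Chinese Remainder Theorem, and then $v_p(x_m/x_n-\gamma)\ge N$, as needed.

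The main obstacle is step (3): one needs the $p$-adic valuation of $\binom{n}{k-1}$ to be simultaneously arbitrarily large and ``removable'', in the sense that the cofactor $\binom{n}{k-1}/p^{v_p(\binom{n}{k-1})}$ still runs over every residue class mod $p^N$ as $n$ varies in the chosen family — and the same for $m$, with a constant that matches so as to cancel in the quotient. Using $n$ of the form $p^Vs$ does exactly this uniformly in $p$; in particular the awkward case $p\le k-1$, where $(k-1)!$ and several of the factors $n-j$ are divisible by $p$, is swallowed into the single constant $C$ and the shift $V'=V-v_p(k-1)$. Keeping the unit $a^{m-n}$ under control throughout is the only extra bookkeeping, and it is paid for by one more CRT congruence on $s$; reciprocal-stability of $R$ and the density of $\mathbb Z_{>0}$ in $\mathbb Z_p$ then upgrade the integer approximations to denseness in all of $\mathbb Q_p$.
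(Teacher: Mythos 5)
Your proof is correct, and it takes a genuinely different route from the paper's. You first identify the explicit closed form $x_n=\binom{n}{k-1}a^{\,n-k+1}$ and then argue entirely by elementary congruences: reduce to $p$-adically approximating positive integers (via inversion-stability of $R$ and density of $\mathbb{Z}_{>0}$ in $\mathbb{Z}_p$), and realize each approximation with the explicit index pair $m=p^{V+e}$, $n=p^{V}s$, choosing $s$ by CRT to control both the unit cofactor of $\binom{n}{k-1}$ and the factor $a^{m-n}$. The paper instead writes $x_n=a^n(c_0+c_1n+\dots+c_{k-1}n^{k-1})$, interpolates $n\mapsto \det(A)x_{n(p-1)}$ by a $p$-adic analytic function $f$ on $\mathbb{Z}_p$ using $\exp_p$ and $\log_p$, observes that $f(0)=0$ and that $c_1=(-1)^k/\bigl(a^{k-1}(k-1)\bigr)\neq 0$ forces $f'(0)\neq 0$, and then invokes the Miska--Murru--Sanna result (Theorem \ref{zero-dense}) that an analytic function with a simple zero has dense quotient set of its values. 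Your approach is longer but self-contained and constructive, in the spirit of Garcia--Luca's original Fibonacci argument; it also makes visible exactly how the awkward case $p\le k-1$ is absorbed (the shift $V'=V-v_p(k-1)$ and the common unit $C$ cancelling in the ratio), which in the paper's treatment is hidden in the fact that Theorem \ref{zero-dense} requires only a simple zero, not a unit derivative. The paper's route is much shorter given the cited machinery and is uniform with the other theorems in the article. The only points worth tightening in your write-up are presentational: interpret $\binom{n}{k-1}a^{\,n-k+1}$ as $0$ for $n\le k-2$ despite the negative exponent of $a$, and note explicitly that $0\in\overline{R}$ (e.g.\ because $x_0=0$), though your approximation of the integers $p^M$ already covers it.
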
	
\begin{rem}
	Let $a\in\mathbb{Z}$. Consider the $k$-th order linear recurrence sequence $(x_n)_{n\geq 0}$ generated by the recurrence relation 
	$$ x_{n}=\binom{k}{1}ax_{n-1}-\binom{k}{2}a^2x_{n-2}+\dots+(-1)^{k-1}\binom{k}{k}a^kx_{n-k}$$  for $n\geq k$, where $x_0=\dots=x_{k-2}=0,x_{k-1}=1$. Then, the quotient set of $(x_n)_{n\geq 0}$ is dense in $\mathbb{Q}_p$ for all primes $p$ not dividing $a$. This generalises \cite[Theorem 1.9]{reccurence} for the case $k=2$.\\
	Note that a linear recurrence sequence generated by a relation of the above form may not always have a dense quotient set in $\mathbb{Q}_p$. For example, consider the $p$-th order linear recurrence sequence $(x_n)$ generated by the recurrence relation 
	$$ x_{n}=\binom{p}{1}ax_{n-1}-\binom{p}{2}a^2x_{n-2}+\dots+(-1)^{p-1}\binom{p}{p}a^px_{n-p}$$ for $n\geq p$, where the initial values  $x_0,\dots,x_{p-1}\in\mathbb{Z}\backslash\{0\}$ have the same $p$-adic valuation. Then, the quotient set of $(x_n)$ is not dense in $\mathbb{Q}_p$ which follows from \cite[Theorem 1.10]{reccurence}.
\end{rem}
In case of third order recurrence sequences, we prove the following result where we do not need to fix all the initial values.
\begin{thm}\label{thm1}
		Let $(x_n)_{n\geq 0}$ be a third order linear recurrence sequence given by
		\begin{align*}
		x_n=b_1x_{n-1}+b_2x_{n-2}+b_3x_{n-3},
		\end{align*}
		for all integers $n\geq 3$, where $b_1,b_2, b_3,x_0,x_1, x_2\in \mathbb{Z}$, with $b_3\neq0$.
		Suppose that the characteristic polynomial of $(x_n)_{n\geq 0}$ is given by $(x-a)(x-b)(x-c)$, where $a,b,c\in \mathbb{Z}$. Let $p$ be a prime such that $p\nmid abc$. Then, the following hold.
		\begin{enumerate}
			\item[(a)] Suppose that $a=b=c$. If $p|x_0 $ and $p\nmid 4ax_1-x_2-3a^2x_0$, then the quotient set  of $(x_n)_{n\geq0}$ is dense in $\mathbb{Q}_p$. Moreover, if  $x_0=0,$ then the quotient set of $(x_n)_{n\geq 0}$ is dense in $\mathbb{Q}_p$ if and only if $4ax_1\neq x_2$.
			\item[(b)] Suppose that $a=c\neq b$. If $p|x_0$ and $p\nmid\left(a-b\right)\left(x_2-x_1(a+b)+x_0ab\right)$, then  the quotient set of $(x_n)_{n\geq 0}$ is dense in $\mathbb{Q}_p$.
	\end{enumerate}  
\end{thm}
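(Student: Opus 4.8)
The plan is to pass to the Binet-type closed form, translate the hypotheses into statements about the $p$-adic sizes of the expansion coefficients, and then in each case exhibit a family of terms whose ratios are dense in $\mathbb{Q}_p$.

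First I would record the closed forms: for characteristic polynomial $(x-a)^3$ one has $x_n=(c_0+c_1n+c_2n^2)a^n$, and for $(x-a)^2(x-b)$ with $a\neq b$ one has $x_n=(c_0+c_1n)a^n+c_2b^n$, where the $c_i$ are the usual partial-fraction coefficients determined by $x_0,x_1,x_2$. The relevant $3\times 3$ linear systems have determinants $2a^3$ and $a(a-b)^2$ respectively; thus in case (b) all $c_i$ lie in $\mathbb{Z}_p$ whenever $p\nmid ab(a-b)$, while in case (a) one only has $c_i\in\tfrac12\mathbb{Z}_p$, so there it is cleaner to work with the integer polynomial $P(T):=2a^2(c_2T^2+c_1T+c_0)=\alpha T^2+\beta T+\gamma$, where $\alpha=x_2-2ax_1+a^2x_0$, $\beta=4ax_1-x_2-3a^2x_0$, $\gamma=2a^2x_0$, and to use $v_p(x_n)=v_p(P(n))-v_p(2)$. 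A short computation identifies the hypotheses: in case (a), ``$p\mid x_0$ and $p\nmid 4ax_1-x_2-3a^2x_0$'' is precisely ``$p\mid P(0)$ and $p\nmid P'(0)$''; in case (b), applying the operator $(E-a)(E-b)$, which kills $c_2b^n$ and sends $(c_0+c_1n)a^n$ to $(a-b)c_1a^{n+1}$, shows that ``$p\nmid(a-b)(x_2-(a+b)x_1+abx_0)$'' together with $p\mid x_0$ is exactly ``$c_1\in\mathbb{Z}_p^\times$ and $p\mid c_0+c_2$''. Finally, since $\mathbb{Q}_p^\times=\bigsqcup_{v\in\mathbb{Z}}p^v\mathbb{Z}_p^\times$, it suffices to show that for every $v\in\mathbb{Z}$, every $N\geq 1$ and every $u\in\mathbb{Z}_p^\times$ there are indices $m,n$ with $v_p(x_m/x_n)=v$ and $p^{-v}x_m/x_n\equiv u\pmod{p^N}$ (density near $0$ is then automatic, since such ratios have unbounded valuation).

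For case (a), since $a\in\mathbb{Z}_p^\times$, $v_p(x_n)=v_p(P(n))-v_p(2)$. When $p\mid x_0$, Hensel's lemma applied to $P$ (whose reduction has the simple root $0$, as $p\nmid P'(0)$) gives a root $r\in p\mathbb{Z}_p$ and a factorization $P(T)=(T-r)Q(T)$ with $Q(r)=P'(r)\in\mathbb{Z}_p^\times$, and $Q(n)\in\mathbb{Z}_p^\times$ for every integer $n\equiv r\pmod p$; hence $v_p(x_n)=v_p(n-r)-v_p(2)$ for such $n$, which realizes every sufficiently large valuation, hence every $v\in\mathbb{Z}$ after taking ratios. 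For the residues, given $v,N$ I would take integers $m,n$ with $v_p(m-r)=e_2$, $v_p(n-r)=e_1$, $e_2-e_1=v$, both $e_i$ large, with $m\equiv n\pmod{p-1}$ and with $(m-r)p^{-e_2},(n-r)p^{-e_1}$ prescribed modulo $p^N$ (all compatible by CRT), so that $m\equiv n\pmod{p^{e_1}}$ forces $Q(m)/Q(n)\equiv 1$ and $a^{m-n}\equiv 1\pmod{p^N}$, whence $x_m/x_n\equiv(m-r)/(n-r)\pmod{p^{v+N}}$ has arbitrary unit part. For the ``moreover'': if $x_0=0$ and $4ax_1=x_2$ then $c_0=c_1=0$, so $x_n=c_2n^2a^n$, every term-valuation has the fixed parity of $v_p(c_2)$, the ratio valuations are all even, and $p$ is not approximable, so the quotient set is not dense; conversely if $x_0=0$ but $4ax_1\neq x_2$ then $c_1\neq0$ and $x_n=n(c_2n+c_1)a^n$ (the subcase $c_2=0$ being trivial), and I would instead take $n=n_0p^j$ with $p\nmid n_0$ and $j$ large, so that $v_p(x_n)=j+v_p(c_1)$ independently of $n_0$, and then $x_{n_0p^j}/x_{n_0'p^{j-v}}\equiv p^{v}(n_0/n_0')\,\omega^{\,n_0p^v-n_0'}\pmod{p^{v+N}}$ where $\omega=a^{p^{j-v}}\bmod p^N$ has order dividing $p-1$; choosing $n_0\equiv n_0'\equiv0$ modulo $\mathrm{ord}(\omega)$ and prescribing them modulo $p^N$ (CRT) then produces every unit.

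For case (b) I expect the real work. Here $x_n=(c_0+c_1n)a^n+c_2b^n$ with $c_1\in\mathbb{Z}_p^\times$ and $p\mid c_0+c_2=x_0$, and no polynomial factorization is available. The plan is to go $p$-adic analytic: restrict to $n\equiv0\pmod{p-1}$, write $n=(p-1)m$, set $A=a^{p-1},B=b^{p-1}\in1+p\mathbb{Z}_p$, and let $Y(m):=x_{(p-1)m}=(c_0+c_1(p-1)m)A^m+c_2B^m$, which extends to $m\in\mathbb{Z}_p$ via the Mahler expansions $A^m=\sum_i\binom mi(A-1)^i$ and $B^m=\sum_i\binom mi(B-1)^i$. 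Reducing modulo $p$ gives $\overline Y(m)=\overline{x_0}+\overline{c_1(p-1)}\,m$, a linear polynomial with unit leading coefficient vanishing at $m\equiv0$, so by Weierstrass preparation (Hensel's lemma in the relevant Tate algebra) $Y(m)=(m-m^\ast)U(m)$ with $m^\ast\in p\mathbb{Z}_p$ and $U(m)\in\mathbb{Z}_p^\times$ for all $m\in\mathbb{Z}_p$, whence $v_p(x_{(p-1)m})=v_p(m-m^\ast)$; this realizes all large valuations, and choosing integers $m,m'$ near $m^\ast$ with $v_p(m-m^\ast)-v_p(m'-m^\ast)=v$, both large, and with prescribed unit parts modulo $p^N$, gives $x_{(p-1)m}/x_{(p-1)m'}=\frac{(m-m^\ast)U(m)}{(m'-m^\ast)U(m')}\equiv p^{v}\cdot(\text{arbitrary unit})\pmod{p^{v+N}}$, since $m\equiv m'$ to a high power of $p$ forces $U(m)/U(m')\equiv1\pmod{p^N}$. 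The main obstacle is making this clean and uniform: verifying the integrality/convergence needed for the Weierstrass step, and especially handling $p=2$, where the naive power series in $m$ need not converge and one must argue directly from Mahler expansions. A self-contained alternative avoids $p$-adic analysis by proving the valuation statement inductively: from the congruence $x_{\,n+p^j(p-1)m}\equiv x_n+c_1(p-1)p^ja^nm\pmod{p^{j+1}}$ one boosts a zero of $(x_n)$ modulo $p^j$ (starting from $x_0\equiv0\pmod p$) to one modulo $p^{j+1}$, and the bulk of the effort then lies in the bookkeeping needed to control valuations and residues simultaneously.
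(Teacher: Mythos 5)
Your proposal is correct and its skeleton coincides with the paper's: both pass to the Binet closed forms, interpolate the subsequence $x_{n(p-1)}$ by a $p$-adic analytic function on $\mathbb{Z}_p$, and translate the hypotheses $p\mid x_0$ and $p\nmid 4ax_1-x_2-3a^2x_0$ (resp.\ $p\nmid(a-b)(x_2-(a+b)x_1+abx_0)$) into ``$f(0)\equiv 0$ and $f'(0)\not\equiv 0\pmod p$,'' so that Hensel's lemma yields a simple zero near $0$; your identification of the hypotheses through $P(0),P'(0)$ and through the operator $(E-a)(E-b)$ reproduces exactly the coefficients $c_0,c_1,c_2$ the paper computes. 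The genuine difference is the endgame: at this point the paper simply invokes Theorem \ref{zero-dense} (Miska--Murru--Sanna, \cite[Corollary 1.3]{piotr}), which says that an analytic $f\colon\mathbb{Z}_p\to\mathbb{Q}_p$ with a simple zero has $R(f(\mathbb{N}))$ dense in $\mathbb{Q}_p$, whereas you re-prove that implication from scratch in each case --- factoring out the simple zero (explicitly for the quadratic in (a), via Weierstrass preparation in (b)) and then running a valuation-plus-CRT argument to realize every $p^v u$. Your direct argument is sound, and your treatment of the degenerate subcase $4ax_1=x_2$ (parity of valuations plus Lemma \ref{lem1}) matches the paper's; but citing Theorem \ref{zero-dense} would collapse the bulk of your work to one line. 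One payoff of your self-contained route is that it surfaces the $p=2$ convergence issue for $\exp_p(z\log_p a^{p-1})$ --- a point the paper's proof passes over silently (for $p=2$ one should replace $a^{p-1}$ by $a^{2}$, or argue via Mahler expansions as you propose).
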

	\begin{ex}
		Let $a\in\mathbb{Z}$ be such that $p\nmid a$, and let $(x_n)_{n\geq 0}$ be a linear recurrence sequence defined by the recurrence relation 
		$$x_{n+1}=3ax_n-3a^2x_{n-1}+a^3x_{n-2}$$ 
		for $n\geq 2$, where $x_0=0$, and $x_1$ and $x_2$ are any integers satisfying $\gcd(4a,x_2)=1$. Then, by Theorem \ref{thm1} (a),  the quotient set of $(x_n)_{n\geq 0}$ is dense in $\mathbb{Q}_p$.
		\end{ex}
	\section{Preliminaries}
	Let $r$ be a nonzero rational number. Given a prime number $p$, $r$ has a unique representation of the form $r= \pm p^k a/b$, where $k\in \mathbb{Z}, a, b \in \mathbb{N}$ and $\gcd(a,p)= \gcd(p,b)=\gcd(a,b)=1$. 
	The $p$-adic valuation of $r$ is defined as $\nu_p(r)=k$ and its $p$-adic absolute value is defined as $\|r\|_p=p^{-k}$. By convention, $\nu_p(0)=\infty$ and $\|0\|_p=0$. The $p$-adic metric on $\mathbb{Q}$ is $d(x,y)=\|x-y\|_p$. 
	The field $\mathbb{Q}_p$ of $p$-adic numbers is the completion of $\mathbb{Q}$ with respect to the $p$-adic metric. The $p$-adic absolute value can be extended to a finite normal extension $\mathbb{K}$ over $\mathbb{Q}_p$ of degree  $n$. 
	For $\alpha\in \mathbb{K}$, define $\|\alpha\|_p$ as the $n$-th root of the determinant of the matrix of linear transformation from the vector space $\mathbb{K}$ over $\mathbb{Q}_p$ to itself defined by $x\mapsto \alpha x$ for all $x\in \mathbb{K}$. 
	Also, $\nu_p(\alpha)$ is the unique rational number satisfying $\|\alpha\|_p=p^{-\nu_p(\alpha)}$.
	The ring of integers of $\mathbb{K}$, denoted by $\mathcal{O}$, is defined as the set of all elements in $\mathbb{K}$ with $p$-adic absolute value less than or equal to one. A function $f: \mathcal{O}\rightarrow \mathcal{O}$ is called analytic if there exists a sequence $(a_n)_{n\geq 0}$ in $\mathcal{O}$ such that 
	$$ f(z)=\sum_{n=0}^{\infty}a_nz^n$$ for all $z\in \mathcal{O}$.
	\par We recall  definitions of $p$-adic exponential and logarithmic function.  For $a\in\mathbb{K}$ and $r>0$, we denote $\mathcal{D}(a,r):=\{z\in \mathbb{K}: \|z-a\|_p<r\}$. Let $\rho=p^{-1/(p-1)}$.
	\par 
	For $z\in \mathcal{D}(0,\rho)$, the $p$-adic exponential function is defined as $$\exp_p(z)=\sum_{n=0}^{\infty}\frac{z^n}{n!}.$$ The derivative is given by $\exp_p'(z)=\exp_p(z)$.
	For $\mathcal{D}(1,\rho)$, the $p$-adic logarithmic function is defined as  $$\log_p(z)=\sum_{n=1}^{\infty}\frac{(-1)^{n-1}(z-1)^n}{n}.$$ 
	For $z\in \mathcal{D}(1,\rho)$, we have $\exp_p(\log_p(z))=z$. If $\mathbb{K}$ is unramified and $p\neq 2$, then $ \mathcal{D}(0,\rho)= \mathcal{D}(0,1)$ and  $\mathcal{D}(1,\rho)= \mathcal{D}(1,1)$. More properties of these functions can be found in \cite{gouvea}.
	\par Next, we state a result for analytic functions which will be used in the proofs of our theorems.
	\begin{thm}\cite[Hensel's lemma]{gouvea} Let $f: \mathcal{O}\rightarrow \mathcal{O}$ be analytic.
		Let $b_0\in \mathcal{O}$ be such that $\|f(b_0)\|_p<1$ and $\|f'(b_0)\|_p=1$. Then there exists a unique $b\in\mathcal{O}$ such that $f(b)=0$ and $\|b-b_0\|_p<\|f(b_0)\|_p$. 
	\end{thm}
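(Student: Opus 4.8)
The plan is to prove the statement by the $p$-adic Newton iteration (successive approximation), which converges quadratically because $\mathbb{K}$ is complete and ultrametric. Everything rests on an integral Taylor expansion of $f$. Writing $f(z)=\sum_{n\geq 0}a_nz^n$ with $a_n\in\mathcal{O}$ (convergence on all of $\mathcal{O}$, in particular at $z=1$, forces $a_n\to 0$), I would expand, for $x,h\in\mathcal{O}$,
\[
f(x+h)=\sum_{n\geq 0}a_n\sum_{k=0}^{n}\binom{n}{k}x^{n-k}h^k=\sum_{k\geq 0}c_k(x)\,h^k,\qquad c_k(x):=\sum_{n\geq k}a_n\binom{n}{k}x^{n-k}.
\]
The points to record are that $c_0(x)=f(x)$, that $c_1(x)=\sum_{n\geq 1}na_nx^{n-1}=f'(x)$, and that each $c_k(x)$ lies in $\mathcal{O}$. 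From the expansion I extract the two estimates that drive the whole argument: $\|f(x+h)-f(x)-f'(x)h\|_p\leq\|h\|_p^2$, and, applying the same reasoning to the analytic function $f'$ (whose coefficients $na_n$ again lie in $\mathcal{O}$ and tend to $0$), $\|f'(x+h)-f'(x)\|_p\leq\|h\|_p$.

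Next I would set up the iteration $b_{n+1}=b_n-f(b_n)/f'(b_n)$ and prove by induction on $n$ that (i) $b_n\in\mathcal{O}$, (ii) $\|f'(b_n)\|_p=1$, and (iii) $\|f(b_n)\|_p\leq\|f(b_0)\|_p^{2^n}$. Writing $t=\|f(b_0)\|_p<1$, the inductive step runs as follows: since $\|f'(b_n)\|_p=1$, the increment $h_n=-f(b_n)/f'(b_n)$ lies in $\mathcal{O}$ with $\|h_n\|_p=\|f(b_n)\|_p$, so $b_{n+1}\in\mathcal{O}$; the first estimate applied at $x=b_n$ gives $f(b_{n+1})=\sum_{k\geq 2}c_k(b_n)h_n^k$, whence $\|f(b_{n+1})\|_p\leq\|h_n\|_p^2=\|f(b_n)\|_p^2$; and the second estimate gives $\|f'(b_{n+1})-f'(b_n)\|_p\leq\|h_n\|_p<1$, so $\|f'(b_{n+1})\|_p=1$ by the ultrametric inequality. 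Because $\|b_{n+1}-b_n\|_p=\|h_n\|_p\leq t^{2^n}\to 0$, the sequence is Cauchy and converges to some $b\in\mathcal{O}$ (as $\mathcal{O}$ is closed in $\mathbb{K}$); continuity of the analytic $f$ then yields $f(b)=0$. Finally, $b-b_0=\sum_{n\geq 0}(b_{n+1}-b_n)$, where the first term has norm $t$ and all later terms have norm $\leq t^2<t$, so $\|b-b_0\|_p\leq\|f(b_0)\|_p$, placing $b$ in the prescribed neighbourhood of $b_0$.

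For uniqueness I would suppose $b'$ is another root of the same neighbourhood, set $h=b'-b$ (so $\|h\|_p<1$, and $\|f'(b)\|_p=1$ since $\|f'(b)-f'(b_0)\|_p\leq\|b-b_0\|_p<1$), and use the Taylor expansion once more:
\[
0=f(b')=f'(b)h+\sum_{k\geq 2}c_k(b)h^k=h\Bigl(f'(b)+\sum_{k\geq 2}c_k(b)h^{k-1}\Bigr).
\]
Since $\|f'(b)\|_p=1$ while the correction $\sum_{k\geq 2}c_k(b)h^{k-1}$ has norm $\leq\|h\|_p<1$, the bracketed factor is a unit, forcing $h=0$ and hence $b'=b$.

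The step I expect to be the main obstacle is the very first one: rigorously justifying the integral Taylor expansion, namely that the double series may be rearranged and that the coefficients $c_k(x)$ genuinely lie in $\mathcal{O}$ for every $x\in\mathcal{O}$. This is where the hypotheses $a_n\in\mathcal{O}$ and $a_n\to 0$ (together with the resulting bound $\|a_n\binom{n}{k}x^{n-k}\|_p\leq 1$ and decay to $0$) are essential, and it is precisely the mechanism that both forces quadratic convergence and keeps $\|f'(b_n)\|_p=1$ throughout the iteration; once these estimates are secured, the remaining work is routine ultrametric bookkeeping.
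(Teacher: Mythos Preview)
The paper does not supply its own proof of this statement; it is quoted from Gouv\^{e}a's textbook, with the added remark that the polynomial case treated there carries over verbatim to power series with coefficients in $\mathcal{O}$. Your Newton iteration is exactly that standard argument, and your treatment of the integral Taylor expansion---including the rearrangement of the double series, which is legitimate in the ultrametric setting because $\|a_n\binom{n}{k}x^{n-k}h^k\|_p\le\|a_n\|_p\to 0$ uniformly in $k$---is correct. So there is nothing substantive to compare: you have written out precisely the proof the paper points to.

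One point you glossed over deserves mention. Your iteration in fact yields $\|b-b_0\|_p=\|f(b_0)\|_p$ exactly whenever $f(b_0)\neq 0$: the first increment has $\|b_1-b_0\|_p=t$ while every later increment has norm at most $t^2<t$, and the ultrametric equality case then forces $\|b-b_0\|_p=t$. You write only ``$\le$'' and then assert that this places $b$ in the prescribed neighbourhood, but the theorem as stated demands the \emph{strict} inequality $\|b-b_0\|_p<\|f(b_0)\|_p$, which your $b$ does not satisfy. This is really a defect in the statement as recorded in the paper (the standard formulation has $\|b-b_0\|_p\le\|f(b_0)\|_p$, equivalently $\|b-b_0\|_p<1$, and indeed if $f(b_0)=0$ the strict version is impossible), not in your method; but you should flag the mismatch rather than elide it.
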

Note that in \cite{gouvea}, Gouv\^{e}a states Hensel's lemma for polynomials with coefficients in $\mathcal{O}$. However, Hensel's lemma is also true and follows similarly for functions given by power series with coefficients in the ring $\mathcal{O}$.
We will only be considering $\mathbb{K}=\mathbb{Q}_p$ throughout this article. The following results are useful in proving denseness of quotient sets.
	\begin{thm}\label{zero-dense}\cite[Corollary 1.3]{piotr}
		Let $f\colon\mathbb{Z}_p\rightarrow \mathbb{Q}_p$ be an analytic function with a simple zero in $\mathbb{Z}_p$. Then, $R(f(\mathbb{N}))$ is dense in $\mathbb{Q}_p$.
	\end{thm}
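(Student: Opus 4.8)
My plan is to deduce Theorem~\ref{zero-dense} from the local behaviour of $f$ at its simple zero together with the density of $\mathbb{N}$ in $\mathbb{Z}_p$. Write $f(z)=\sum_{n\ge 0}a_nz^n$ with $a_n\in\mathbb{Q}_p$ and $\|a_n\|_p\to 0$, let $\alpha\in\mathbb{Z}_p$ be the simple zero, so $f(\alpha)=0$ and $f'(\alpha)\neq 0$, and set $c:=\nu_p(f'(\alpha))\in\mathbb{Z}$. The first step is to re-expand $f$ about $\alpha$: since $f(\alpha)=0$ one can write $f(z)=(z-\alpha)g(z)$, where $g$ is analytic on $\mathbb{Z}_p$ and $g(\alpha)=f'(\alpha)$. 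Because $g$ is continuous with $g(\alpha)\neq 0$, there is an integer $N\ge 1$ with $\nu_p(g(z))=c$ whenever $\nu_p(z-\alpha)\ge N$; enlarging $N$ if necessary, an analogous computation gives the scaled-isometry estimate $\nu_p(f(z)-f(w))=c+\nu_p(z-w)$ for all $z,w$ in the closed disc $\mathcal{D}:=\{z\in\mathbb{Z}_p:\nu_p(z-\alpha)\ge N\}$. In particular $\nu_p(f(z))=c+\nu_p(z-\alpha)$ on $\mathcal{D}$, so $f$ vanishes on $\mathcal{D}$ only at $z=\alpha$.

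The second step is to identify $f(\mathcal{D})$ and to push density through $f$. After rescaling the variable so as to normalise $f'$ to a unit, Hensel's lemma (in the power-series form recalled above, equivalently the $p$-adic inverse function theorem) shows that $f$ maps $\mathcal{D}$ bijectively onto the ball $\mathcal{B}:=\{y\in\mathbb{Q}_p:\nu_p(y)\ge c+N\}$. Since $\mathbb{Z}$, hence $\mathbb{N}$, is dense in $\mathbb{Z}_p$, the set $\mathbb{N}\cap\mathcal{D}$ is dense in the compact set $\mathcal{D}$, and therefore, by continuity of $f$, the set $T:=f(\mathbb{N}\cap\mathcal{D})$ is dense in $f(\mathcal{D})=\mathcal{B}$. (At most the single value $f(n)=0$, which occurs only if $\alpha$ happens to be a natural number, must be discarded, and this does not affect density of $T$ in $\mathcal{B}$.) I would also record the elementary fact that for every integer $j\ge 0$ there is an $n\in\mathbb{N}$ with $\nu_p(n-\alpha)=j$ exactly: take an integer $\equiv\alpha\pmod{p^{j}}$, replace it by itself $+\,p^{j}$ if it is also $\equiv\alpha\pmod{p^{j+1}}$, and add a large multiple of $p^{j+1}$ to make it positive.

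The last step is a one-line valuation estimate for the ratios. Fix a target $t\in\mathbb{Q}_p$ and a precision $M\in\mathbb{N}$. If $t=0$, fix any admissible $n$ and choose $m\in\mathbb{N}\cap\mathcal{D}$ with $\nu_p(m-\alpha)$ arbitrarily large, so that $\nu_p(f(m)/f(n))$ is arbitrarily large. If $t\neq 0$, put $\ell:=\nu_p(t)$ and choose $n\in\mathbb{N}$ with $\nu_p(n-\alpha)=j$ for some integer $j\ge\max(N,\,N-\ell)$; then $\nu_p(t\,f(n))=\ell+c+j\ge c+N$, so $t\,f(n)\in\mathcal{B}$, and by density of $T$ there is $m\in\mathbb{N}\cap\mathcal{D}$ with $\nu_p(f(m)-t\,f(n))\ge \ell+c+j+M$. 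Consequently
\begin{equation*}
\nu_p\!\left(\frac{f(m)}{f(n)}-t\right)=\nu_p\big(f(m)-t\,f(n)\big)-\nu_p\big(f(n)\big)\ \ge\ (\ell+c+j+M)-(c+j)\ =\ \ell+M\ \ge\ M,
\end{equation*}
so $f(m)/f(n)$ lies within $p^{-M}$ of $t$. As $t$ and $M$ were arbitrary, $R(f(\mathbb{N}))$ is dense in $\mathbb{Q}_p$.

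The only genuinely delicate part I foresee is the first two steps: justifying the exact valuation identity $\nu_p(f(z))=c+\nu_p(z-\alpha)$ near the simple zero, and arranging the rescaling needed to bring $f$ into the scope of the stated version of Hensel's lemma so as to get $f(\mathcal{D})=\mathcal{B}$. Once $f$ is known to be a scaled isometry of $\mathcal{D}$ onto $\mathcal{B}$, the rest is just the density of $\mathbb{N}$ in $\mathbb{Z}_p$ combined with the strong triangle inequality.
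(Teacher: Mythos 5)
The paper does not actually prove this statement: it is imported as \cite[Corollary 1.3]{piotr}, so there is no internal proof to compare against. Judged on its own, your argument is correct and follows the standard route for that corollary: factor $f(z)=(z-\alpha)g(z)$ with $g(\alpha)=f'(\alpha)\neq 0$, use ultrametric continuity to get the scaled-isometry identity $\nu_p(f(z)-f(w))=c+\nu_p(z-w)$ on a small disc $\mathcal{D}$ about $\alpha$, and then transport the density of $\mathbb{N}$ in $\mathbb{Z}_p$ through $f$ and divide. Two small points need repair, neither of which is a real gap. First, the version of Hensel's lemma recalled in the paper requires $\|f(b_0)\|_p<1$ strictly, so after your rescaling it does not by itself produce preimages of points on the boundary sphere $\nu_p(y)=c+N$ of $\mathcal{B}$; but you only ever use that $f(\mathcal{D})$ is \emph{dense} in $\mathcal{B}$, and that already follows from the scaled-isometry property alone (for each $k\geq N$ the $p^{k-N}$ subdiscs of $\mathcal{D}$ of radius $p^{-k}$ map into pairwise distinct subdiscs of $\mathcal{B}$ of radius $p^{-k-c}$, of which there are exactly $p^{k-N}$, so every such subdisc of $\mathcal{B}$ meets $f(\mathcal{D})$); Hensel's lemma can therefore be dispensed with. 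Second, the final chain $\nu_p\bigl(f(m)/f(n)-t\bigr)\geq \ell+M\geq M$ fails when $\ell=\nu_p(t)<0$; since $t$ is fixed before the precision is chosen, simply request precision $M-\ell$ from the density of $T$, or observe that $\ell+M\to\infty$ as $M\to\infty$, which is all that density requires. With these cosmetic adjustments the proof is complete and matches the argument one would find in the cited source.
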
                              
	\begin{lem}\label{lem1}\cite[Lemma 2.1]{garciaetal}
		If $S$ is dense in $\mathbb{Q}_p$, then for each finite value of the $p$-adic valuation, there is an element of $S$ with that valuation.
	\end{lem}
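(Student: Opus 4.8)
The plan is to exploit the ultrametric structure of $\mathbb{Q}_p$, in particular the isosceles triangle principle: if $a,b\in\mathbb{Q}_p$ satisfy $\|a\|_p\neq\|b\|_p$, then $\|a+b\|_p=\max\{\|a\|_p,\|b\|_p\}$. Since the finite values of $\nu_p$ on $\mathbb{Q}_p$ are precisely the integers, it suffices to show that for each $n\in\mathbb{Z}$ there is some $s\in S$ with $\nu_p(s)=n$.

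First I would fix $n\in\mathbb{Z}$ and choose the reference point $x=p^n\in\mathbb{Q}_p$, which satisfies $\nu_p(x)=n$, that is $\|x\|_p=p^{-n}$. Next I would apply the density of $S$ to the open ball $\mathcal{D}(x,p^{-n})=\{y\in\mathbb{Q}_p:\|y-x\|_p<p^{-n}\}$, which is a nonempty open set (it contains $x$). Density then produces an element $s\in S$ lying in this ball, so that $\|s-x\|_p<p^{-n}$.

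The final step is to read off the valuation of $s$. Writing $s=x+(s-x)$ and using $\|s-x\|_p<p^{-n}=\|x\|_p$, the isosceles triangle principle forces $\|s\|_p=\|x\|_p=p^{-n}$, whence $\nu_p(s)=n$, as required. Repeating this for every $n\in\mathbb{Z}$ completes the argument.

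There is no substantive obstacle in this proof; it is an elementary consequence of density together with the ultrametric inequality. The only point demanding care is that the approximation radius must be chosen \emph{strictly} smaller than $\|x\|_p$, so that $x$ is the dominant term in the sum $s=x+(s-x)$ and its norm is preserved. A radius equal to or larger than $\|x\|_p$ would fail to pin down $\|s\|_p$, since cancellation could then decrease the norm and shift the valuation.
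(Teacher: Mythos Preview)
Your argument is correct and is the standard way to prove this fact. Note, however, that the paper does not supply its own proof of this lemma: it is simply quoted from \cite[Lemma~2.1]{garciaetal}, so there is nothing in the paper to compare your approach against.
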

	\section{Proof of the theorems}
	In the proofs, we will use certain representation of the $n$th term of linear recurrence sequence in terms of the roots of the characteristic polynomial. More details on such representations can be found in \cite{poonen}.
	\begin{proof}[Proof of Theorem \ref{new-thm1}]
		For $n\geq 0$, the $n$th term of the sequence $(x_n)$ is given by $$x_n=c_0a_1^n+c_1a_2^n+\dots+c_{k-1}a_k^n,$$
		where 
		\[C=\begin{bmatrix}
			c_0&c_1&\dots & c_{k-1}
		\end{bmatrix}^{t}\] 
		is given by $C=\frac{1}{\det(A)}\text{adj}(A)\cdot X_0$, where 
		\[ X_0= \begin{bmatrix}
				x_0\\
				x_1\\
				\vdots \\
				x_{k-1}				
			\end{bmatrix},
				A = \begin{bmatrix}
			1&1&\dots&1\\
			a_1&a_2&\dots&a_{k}\\
			a_1^2&a_2^2&\dots&a_k^2\\
			\vdots&\vdots&\ddots&\vdots\\
			a_1^{k-1}&a_2^{k-1}&\dots&a_k^{k-1}
		\end{bmatrix}.\]
		We define a  function $f$ as
		$$f(z):=\det(A)\left[c_0\exp_p{(z\log_p{a_1^{p-1}})}+\dots+c_{k-1}\exp_p{(z\log_p{a_k^{p-1}})}\right].$$ 
		Since $p\nmid a_1a_2\dots a_k$, $f$ is defined for all  $z\in \mathbb{Z}_p$ and $f(n)=\det(A)x_{n(p-1)}$ for all $n\in \mathbb{Z}_{\geq 0}$. Moreover, $\mathbb{Z}_{\geq 0}$ is dense in $\mathbb{Z}_p$. Therefore, $f$ is an analytic function from $\mathbb{Z}_p$ to $\mathbb{Z}_p$.
		We have, 
		$$f(0)=\det(A)(c_0+c_1+\dots+c_{k-1})=\det(A)x_0=0$$ and 
		$$f'(0)=\det(A)(c_0\log_p{a_1^{p-1}}+c_1\log_p{a_2^{p-1}}+\dots+c_{k-1}\log_p{a_{k}^{p-1}}).$$
		Suppose that $f'(0)=0$. Since $\gcd(a_i,a_j)=1$ for all $i\neq j$, therefore, $a_1^{p-1},\dots,a_k^{p-1}$ are multiplicatively independent i.e, $(a_1^{p-1})^{u_1}(a_2^{p-1})^{u_2}\dots(a_k^{p-1})^{u_k}=1$ for some integers $u_1,u_2,\dots,u_k$ only if $u_1=u_2=\dots=u_k=0$. Hence, $$\log_p{a_1^{p-1}},\log_p{a_2^{p-1}},\dots,\log_p{a_{k}^{p-1}}$$ are linearly independent over $\mathbb{Z}$. Thus, if $f'(0)=0$ then $c_0=c_1=\dots=c_{k-1}=0$ which is not possible. Hence, $f'(0)$ is nonzero. Therefore, $0$ is a simple zero of $f$ in $\mathbb{Z}_p$. By Theorem \ref{zero-dense}, $R(f(\mathbb{N}))=R((x_{n(p-1)}))$ is dense in $\mathbb{Q}_p$.  Hence, the quotient set of $(x_n)_{n\geq 0}$ is dense in $\mathbb{Q}_p$. 
		\end{proof}
		\begin{proof}[Proof of Theorem \ref{thm2}]
		The $n$th term of the sequence is given by 
		\begin{align*}
			x_n&=a_1^n(c_0+c_1n)+c_2a_2^n+c_3a_3^n+\dots+c_{k-1}a_{k-1}^n\\
			&=a_1^n(c_0+c_1n+c_2(a_2a_1^{-1})^n+c_3(a_3a_1^{-1})^n+\dots+c_{k-1}(a_{k-1}a_1^{-1})^n),
		\end{align*}
		where 
		\[C=\begin{bmatrix}
			c_0&c_1&\dots &c_{k-1}
		\end{bmatrix}^{t}\] 
		is given by $C=\frac{1}{\det(A)}\text{adj}(A)\cdot X_0$, where 
		\[ X_0=  \begin{bmatrix}
			0\\
			0\\
			\vdots \\
			0\\
			1
		\end{bmatrix},
		A = \begin{bmatrix}
			1&0&1&\dots&1\\
			a_1 & a_1 & a_2 & \dots & a_{k-1}\\
			a_1^2 & 2a_1^2 & a_2^2 & \dots & a_{k-1}^2\\
			\vdots& \vdots & \vdots & \ddots & \vdots\\
			a_1^{k-1} & (k-1)a_1^{k-1} & a_2^{k-1} & \dots & a_{k-1}^{k-1}
		\end{bmatrix}.\]
		\par 
		We define an analytic function $f:\mathbb{Z}_p\rightarrow\mathbb{Z}_p$ as
		\begin{align*} f(z)&:=\det(A)\exp_p{(z\log_p{(a_1)^{p-1}})}(c_0+c_1z(p-1)+c_2\exp_p{(z\log_p{(a_2a_1^{-1})^{p-1}})}\\
			&\hspace{1.2cm}+\dots+ c_{k-1}\exp_p{(z\log_p{(a_{k-1}a_1^{-1})^{p-1}})}).
		\end{align*}
		Then,  $f(n)=\det(A)x_{n(p-1)}$ for all $n\in \mathbb{Z}_{\geq 0}$.
		Also, we have
		\begin{align*}
			f(0)=\det(A)(c_0+c_2+\dots+c_{k-1})=\det(A)x_0=0
		\end{align*}
		and 
		\begin{align*}
			f'(0)&=\det(A)(c_1(p-1)+c_2\log_p{(a_2a_1^{-1})^{p-1}}+\dots+c_{k-1}\log_p{(a_{k-1}a_1^{-1})^{p-1}}\\
			&\hspace{1.2cm} +(c_0+c_2+\dots+c_{k-1})\log_p{(a_1)^{p-1}})\\
			&=\det(A)(c_1(p-1)+c_2\log_p{(a_2a_1^{-1})^{p-1}}+\dots+c_{k-1}\log_p{(a_{k-1}a_1^{-1})^{p-1}}).
			\end{align*}
		We find that $\det(A)c_1=(-1)^{k+1}\prod_{1\leq i<j\leq (k-1)}(a_i-a_j)$. By the hypothesis, we have $p\nmid \det(A)c_1$. Using the definition of $\log_p(z)$, we obtain that $p$ divides $\log_p{(a_ia_1^{-1})^{p-1}}$ for $2\leq i \leq k-1$. Therefore, $p\nmid f'(0)$ which implies $f'(0)$ is nonzero. Hence, $0$ is a simple zero of $f$ in $\mathbb{Z}_p$. By Theorem \ref{zero-dense}, $R(f(\mathbb{N}))=R(x_{n(p-1)})$ is dense in $\mathbb{Q}_p$. Hence, the quotient set of $(x_n)_{n\geq 0}$ is dense in $\mathbb{Q}_p$.
	\end{proof}
	\begin{proof}[Proof of Theorem \ref{thm3}]
		The $n$th term of the sequence is given by
		$$x_n=a^n(c_0+c_1n+\dots +c_{k-1}n^{k-1}),$$ where 
		$$C=\begin{bmatrix}
			c_0&c_1&\dots &c_{k-1}
		\end{bmatrix}^{t}$$ is given by $C=\frac{1}{\det(A)}\text{adj}(A)\cdot X_0$, where
		\[ X_0=  \begin{bmatrix}
			0\\
			0\\
			\vdots \\
			0\\
			1
		\end{bmatrix},
		A = \begin{bmatrix}
			1 & 0 & 0 & \dots & 0\\
			a & a & a & \dots & a\\
			a^2 & 2a^2 & 2^2a^2 & \dots & 2^{k-1}a^2\\
			\vdots& \vdots & \vdots & \ddots & \vdots\\
			a^{k-1} & (k-1)a^{k-1} & (k-1)^2a^{k-1} & \dots & (k-1)^{k-1}a^{k-1}
		\end{bmatrix}.\]
		We simplify $C=\frac{1}{\det(A)}\text{adj}(A)\cdot X_0$ and obtain
		\begin{align*}
			\begin{bmatrix}
				c_0 \\ c_1 \\ 
				c_2\\\vdots \\ c_{k-1}
			\end{bmatrix}=\begin{bmatrix}
				1 & 0 & \dots & 0\\
				1 & 1 & \dots & 1\\
				1 & 2 &\dots & 2^{k-1}\\
				\vdots& \vdots & \ddots & \vdots\\
				1 & (k-1)  & \dots & (k-1)^{k-1}
			\end{bmatrix}^{-1}\begin{bmatrix}
				0 \\ 0 \\ 0 \\ \vdots \\ 1/a^{k-1}
			\end{bmatrix}.
		\end{align*}  
	
		\par 
		Next, we consider an analytic function $f:\mathbb{Z}_p\rightarrow \mathbb{Z}_p$ defined as
		\begin{align*} f(z)&:=\det(A)\exp_p{(z\log_p{(a^{p-1})})}(c_0+c_1(p-1)z+c_2(p-1)^2z^2+\\
		&\hspace{2.6cm}+\dots+c_{k-1}(p-1)^{k-1}z^{k-1}).
		\end{align*}
		Let
		\begin{align*}
		h(z):=\exp_p{(z\log_p{(a^{p-1})})}
		\end{align*}  
		and 
		\begin{align*}
		g(z):=\det(A)(c_0+c_1(p-1)z+c_2(p-1)^2z^2+\dots+c_{k-1}(p-1)^{k-1}z^{k-1}).
		\end{align*}
		We have $\|a^n\|_p=1$ and $h(n)=a^{n(p-1)}$ for all positive integers $n$. Hence, $\|h(z)\|_p=1$ for all $z\in \mathbb{Z}_p$. Therefore, $f(z)=0$ if and only if $g(z)=0$ for some $z\in\mathbb{Z}_p$. 
		We have, $g(0)=\det(A)c_0=\det(A)x_0=0$ and $g'(0)=\det(A)c_1(p-1)$. Using Lemma 2.2 of \cite{vandermonde}, we find that 
		\begin{align*}
		c_1=\frac{(-1)^{k}}{a^{k-1}(k-1)}.
		\end{align*}
		Thus, $c_1\neq 0$ for all $k\geq 2$. Therefore, $0$ is a simple zero of $f$ in $\mathbb{Z}_p$. By Theorem \ref{zero-dense}, $R(f(\mathbb{N}))=R(x_{n(p-1)})$ is dense in $\mathbb{Q}_p$, which yields that the quotient set of $(x_n)_{n\geq 0}$ is dense in $\mathbb{Q}_p$. 
	\end{proof}
	\begin{proof}[Proof of Theorem \ref{thm1}]
	We first prove part (a) of the theorem.
	For $n\geq 0$, the $n$th term of the sequence is given by the formula
	$$x_n=a^n(c_0+c_1n+c_2n^2),$$ where
	\begin{align*}
	 c_0&=x_0,\\
	 c_1&=\frac{4ax_1-x_2-3a^2x_0}{2a^2},\\
	 c_2&=\frac{x_2-2ax_1+a^2x_0}{2a^2}.
	 \end{align*}
We define a function $f$ as 
	  $$f(z):= 2a^2\exp_p(z\log_p{a^{p-1}})(c_0+c_1(p-1)z+c_2(p-1)^2z^2).$$
	  	Since $p\nmid a$, $f$ is defined for all  $z\in \mathbb{Z}_p$ and $f(n)=2a^2x_{n(p-1)}$ for all $n\in \mathbb{Z}_{\geq 0}$. Moreover, $\mathbb{Z}_{\geq 0}$ is dense in $\mathbb{Z}_p$. Therefore, $f$ is an analytic function from $\mathbb{Z}_p$ to $\mathbb{Z}_p$.
	 	  We have, $$f(0)=2a^2c_0=2a^2x_0\equiv 0\pmod{p}$$ and
	   $$f'(0)\equiv 2a^2c_1(p-1)=2a^2(4ax_1-x_2-3a^2x_0)(p-1)\not\equiv 0 \pmod{p}.$$ 
	   Therefore, by Hensel's lemma, $f$ has a zero $z_0$ in $\mathbb{Z}_p$ such that $z_0\equiv 0\pmod{p}$. Since $f$ has a power series expansion with $p$-adic integral coefficients, we have $f'(z_0)\equiv f'(0)\pmod{p}$. Hence, $z_0$ is a simple zero of $f$ in $\mathbb{Z}_p$. Therefore, by Theorem \ref{zero-dense}, $R(f(\mathbb{N}))=R((x_{n(p-1)}))$ is dense in $\mathbb{Q}_p$. Hence, the quotient set of $(x_n)_{n\geq 0}$ is dense in $\mathbb{Q}_p$.
	   \par 
	  Next, if $x_0=0$, then $c_0=0$ and $c_1=\frac{4ax_1-x_2}{2a^2}$. We have $f(0)=0$. Suppose that $4ax_1\neq x_2$. Then, $f'(0)\neq 0$ which implies that $0$ is a simple zero of $f$. Therefore, by Theorem \ref{zero-dense}, the quotient set of $(x_n)_{n\geq 0}$ is dense in $\mathbb{Q}_p$. Conversely, suppose that $4ax_1=x_2$. This gives $c_1=0$, and hence $x_n=a^nc_2n^2$. If $c_2= 0$, then $x_n=0$ for all $n$. If $c_2\neq 0$, then the quotient set of $(x_n)_{n\geq 0}$ is equal to the quotient set of $\{a^nn^2:n\in\mathbb{Z}_{\geq 0}\}$. Since $\nu_p(a^nn^2)=2\nu_p(n)$, the $p$-adic valuation of these elements is even for all $n\in\mathbb{Z}_{>0}$. Therefore, by Lemma \ref{lem1}, the quotient set of $(x_n)_{n\geq 0}$ is not dense in $\mathbb{Q}_p$. This completes the proof of part (a) of the theorem.
	 \par 
	  Next, we prove part (b) of the theorem. For $n\geq 0$, the $n$th term of the sequence is given by 
	   $$x_n=c_0a^n+c_1na^n+c_2b^n=a^n(c_0+c_1n+c_2(ba^{-1})^n),$$ where
	   \begin{align*}
	    c_0&=\frac{b^2x_0-2abx_0-x_2+2ax_1}{(b-a)^2}, \\
	    c_1&=\frac{x_2-x_1(a+b)+x_0ab}{a(a-b)}, \\
	    c_2&=\frac{x_2-2ax_1+a^2x_0}{(b-a)^2}.
	    \end{align*}
	   Since $p\nmid ab(a-b)$, we can define an analytic function $f:\mathbb{Z}_p\rightarrow\mathbb{Z}_p$ as 
	   \begin{align*}
	   f(z):=\exp_p(z\log_p{a^{p-1}})(c_0+c_1z(p-1)+c_2\exp_p(z\log_p({ba^{-1})^{p-1}})),
	   \end{align*}
       which satisfies the equation  $f(n)=x_{n(p-1)}$ for all $n\geq 0$. \\
       Now, we have
       \begin{align*}
       f(0)=c_0+c_2=x_0\equiv 0\pmod{p}
       \end{align*}
       and
       \begin{align*}
       f'(0)=c_1(p-1)+c_2\log_p(ba^{-1})^{p-1}+(c_0+c_2)\log_p{a^{p-1}}\not\equiv 0\pmod{p}.
       \end{align*}
       Therefore, by Hensel's lemma, $f$ has a zero $z_0$ in $\mathbb{Z}_p$ such that $z_0\equiv 0\pmod{p}$. Since $f$ has a power series expansion with $p$-adic integral coefficients, we have $f'(z_0)\equiv f'(0)\pmod{p}$. Hence, $z_0$ is a simple zero of $f$ in $\mathbb{Z}_p$. Therefore, by Theorem \ref{zero-dense}, $R(f(\mathbb{N}))=R(x_{n(p-1)})$ is dense in $\mathbb{Q}_p$. Hence, the quotient set of $(x_n)_{n\geq 0}$ is dense in $\mathbb{Q}_p$.
\end{proof}


\end{document}